\newcommand{\keywords}[1]{\par\addvspace\baselineskip
\noindent\keywordname\enspace\ignorespaces#1}
\let\phi\varphi
\let\epsilon\varepsilon
\newcommand{\bvp}[2]{\boxed{\begin{array}{l}#1\\#2\end{array}}}
\newcommand{\double}[2]{\begin{array}{l}#1\\#2\end{array}}
\newcommand{\ZZ}{\mathbb{Z}}
\newcommand{\CC}{\mathbb{C}}
\newcommand{\der}{\partial}
\newcommand{\cum}{{\textstyle \varint}}
\newcommand{\galg}{\mathcal{F}}
\newcommand{\fri}[1]{#1^\lozenge}
\DeclareMathOperator{\Ker}{Ker}
\DeclareMathOperator{\Img}{Im}
\newcommand{\tma}{TH$\exists$OREM$\forall$}
\newcommand{\grgr}{\texttt{GreenGroebner}}
\newcommand{\fafac}[2]{#1^{\underline{#2}}}
\renewcommand{\binom}[2]{\begin{pmatrix}#1\\#2\end{pmatrix}}
\newcommand{\evl}{\text{\scshape\texttt e}}
\newcommand{\ocum}{{\setbox0=%
    \hbox{$\textstyle{\scriptstyle-}{\varint}$}%
    \textstyle{\vcenter{\hbox{$\scriptstyle-$}}\kern-.5\wd0}%
    \!\varint}}
\newcommand{\intdiffop}{\galg[\der,\cum]}
\newcommand{\inner}[2]{\langle #1 | #2 \rangle}
\newcommand{\dirs}{\dotplus}
\newcommand{\mer}[1]{\mathcal{M}(#1)}
\newcommand{\pp}{\mathrm{pp}}
\newcommand{\reg}{\mathrm{reg}}
\newcommand{\orth}[1]{#1^\perp}
\newcommand{\bspc}{\mathcal{B}}
\newcommand{\espc}{\mathcal{E}}
\newcommand{\aspc}{\mathcal{A}}
\newcommand{\ospc}{\mathcal{C}}
\newcommand{\coeffex}[1]{\langle x^{#1}\rangle}
\newcommand{\indproj}[1]{\langle #1 \rangle}
\providecommand{\abs}[1]{\lvert#1\rvert}
\spnewtheorem{myexample}[theorem]{Example}{\bfseries}{\upshape}
\spnewtheorem{mydefinition}[theorem]{Definition}{\bfseries}{\upshape}
\spnewtheorem{mylemma}[theorem]{Lemma}{\bfseries}{\itshape}
\spnewtheorem{myproposition}[theorem]{Proposition}{\bfseries}{\itshape}
\begin{document}

\mainmatter
\title{Two-Point Boundary Problems with\\ One Mild Singularity and an
  Application to Graded Kirchhoff Plates}
\titlerunning{Two-Point Boundary Problems with Singularity}

\author{Markus Rosenkranz\inst{1} \and Jane Liu\inst{2} \and
  Alexander Maletzky\inst{3} \and Bruno Buchberger\inst{3}}
\authorrunning{M. Rosenkranz, J. Liu, A. Maletzky and B. Buchberger}

\institute{University of Kent, \\
Cornwallis Building, Canterbury, Kent CT2 7NF, United Kingdom
\and
Tennessee Tech University, United States
\and
Research Institute for Symbolic Computation (RISC),
Hagenberg, Austria
}

\maketitle

\begin{abstract}
  We develop a new theory for treating boundary problems for linear
  ordinary differential equations whose fundamental system may have a
  singularity at one of the two endpoints of the given interval. Our
  treatment follows an algebraic approach, with (partial)
  implementation in the Theorema software system (which is based on
  Mathematica). We study an application to graded Kirchhoff plates for
  illustrating a typical case of such boundary problems.
\end{abstract}

\keywords{Linear boundary problems, singular boundary problems,
  generalized Green's operator, Green's functions, integro-differential
  operators, ordinary differential equations, Kirchhoff plates,
  functionally graded materials}

\section{Introduction}
\label{sec:intro}

The treatment of boundary problems in symbolic computation was
initiated in the PhD thesis~\cite{Rosenkranz2009} under the guidance
of Bruno Buchberger in cooperation with Heinz Engl; see
also~\cite{RosenkranzBuchbergerEngl2003,Rosenkranz2005} and for the
further
development~\cite{RosenkranzRegensburger2008a,RegensburgerRosenkranz2009,RosenkranzRegensburgerTecBuchberger2012}. Its
implementation was originally carried out within the \tma\
project~\cite{BuchbergerCraciunJebeleanEtAl2006}.

Up to now, we have always assumed differential equations without
singularity or, equivalently, monic differential operators (leading
coefficient function being unity). In this paper, we develop for the
first time an \emph{algebraic theory} for treating boundary problems
with a (mild) \emph{singularity at one endpoint}. For details, we
refer to Section~\ref{sec:bp-one-sing}. Our approach is very different
from the traditional analysis setting in terms of the Weyl-Titchmarsh
theory (limit points and limit circles). It would be very interesting
to explore the connections between our approach and the classical
treatment; however, this must be left for future work.

Regarding the general setup of the \emph{algebraic language for
  boundary problems}, we refer to the references mentioned above, in
particular~\cite{RosenkranzRegensburgerTecBuchberger2012}. At this
point, let us just recall some notation. We start from a fixed
integro-differential algebra~$(\galg, \der, \cum)$. The formulation of
(local) boundary conditions is in terms of \emph{evaluations}, which
are by definition multiplicative linear functionals in~$\galg^*$. We
write~$\galg_1 \le \galg_2$ if~$\galg_1$ is a \emph{subspace}
of~$\galg_2$; the same notation is used for subspaces of the dual
space~$\galg^*$. The \emph{orthogonal} of a subspace~$\bspc \le
\galg^*$ is defined as
\begin{equation*}
  \orth{\bspc} = \{ u \in \galg \mid \beta(u) = 0 \text{ for all
    $\beta \in \bspc$} \}
\end{equation*}
and similarly for the orthogonal of a subspace~$\aspc \le \galg$. We
write~$[f_1, f_2, \dots]$ for the \emph{linear span} of (possibly
infinitely many) elements~$f_1, f_2, \dots \in \galg$; the same
notation is used for linear spans within~$\galg^*$. The \emph{zero
  vector space} of any dimension is denoted by~$O$.

We write
\begin{equation*}
  \fafac{n}{k} = n(n-1) \cdots (n-k+1)
\end{equation*}
for the \emph{falling factorial}, where~$n \in \CC$ could be arbitrary
(but will be an integer for our purposes) while~$k$ is taken to be a
natural number. Note that~$\fafac{n}{k} = 0$ for~$k>n$. Our main
example of an integro-differential algebra will be the
space~$C^\omega(I)$ of analytic functions on a closed
interval~$I=[a,b]$. Recall that by definition this is the space of
functions that are analytic in some open set containing~$[a,b]$.

The development of the new algebraic theory of two-point boundary
problems with a mild singularity (whose treatment is really just
broached in this paper) was triggered by a collaboration between a
\emph{symbolic computation} team (consisting of the first, second and
fourth author) and a researcher in \emph{engineering mechancs} (the
third author). This underlines the importance and fruitfulness of
collaborations between theoretical developments and practical
applications. We present the example that had originally led to this
research in Section~\ref{sec:appl}.

From a methodological point of view, this research on the symbolic
algorithm for linear boundary value problems has particular relevance
and is drawing from the Theorema Project, see
\cite{BuchbergerCraciunJebeleanEtAl2006}. This project aims at
supporting a new paradigm for doing (algorithmic) mathematical
research: In the phase of doing research on new theorems and
algorithms, \tma\ provides a formal language (a version of
predicate logic) and an automated reasoning system by which the
exploration of the theory is supported. In the phase in which
algorithms based on the new theory should be implemented and used in
computing examples, \tma\ allows to program and execute the
algorithms \emph{in the same language}. In the particular case of our
approach to solving linear boundary value problems, the fundamental
theorem on which the approach is based was proved automatically by
checking that the rewrite rules for integro-differential operators
forms a Gr\"obner basis. In a second step, the algorithm for solving
linear boundary value problems is expressed again in the \tma\
language and can then be called by the users by inputting the linear
boundary value problems in a user-friendly notation.

In its current version, the engine for solving boundary problems is
bundled in the \grgr\ package of \tma. As an example, consider the
boundary problem
\[
  \bvp{u'' + \tfrac{1}{x} u' - \tfrac{1}{x^2} u = f,}{u(0)=u(1)=0,}
\]
which we shall consider in greater detail in
Section~\ref{sec:simple-example}. This can be given to \tma\ in the
form
\[
  \mathtt{BPSolve}\left[\ \bvp{\mathtt{u'' + \tfrac{1}{x}\,u' - \tfrac{1}{x^2}\,u = f}}{\mathtt{u[0]=u[1]=0}}\mathtt{, u, x, 0, 1}\right]
\]
leading  either to the solution for $u(x)$ as
\[
  \mathtt{-\frac{1}{2}\frac{1}{x}\left(x^2 \int_0^1 f[\xi]\,\mathbf{d}\xi - x^2 \int_0^x f[\xi]\,\mathbf{d}\xi - x^2 \int_0^1 \xi^2 f[\xi]\,\mathbf{d}\xi + \int_0^x \xi^2 f[\xi]\,\mathbf{d}\xi \right)}
\]
or to the Green's function $g(x,\xi)$ as
\[
  \begin{cases}
    \mathtt{\frac{1}{2}\frac{1}{x}\,(-1+x^2)\,\xi^2} & \mathtt{\Leftarrow\quad \xi\leq x}\\
    \mathtt{\frac{1}{2}\,x\,(-1+\xi^2)} & \mathtt{\Leftarrow\quad x < \xi}
  \end{cases}
\]
at the user's request.

In addition to the \grgr\ package in \tma, a Maple package named
\texttt{IntDiffOp} is also
available~\cite{KorporalRegensburgerRosenkranz2011}. This package was
developed in the frame of Anja Korporal's PhD thesis, supervised by
Georg Regensburger and the first author. The Maple package supports
also generalized boundary problems (see
Section~\ref{sec:simple-example} for their relevance to this paper).
One advantage of the \tma\ system is that both the research phase and
the application phase of our method can be formulated and supported
within the same logic and software system---which we consider to be
quite a novel and promising paradigm for the future.

\section{A Simple Example}
\label{sec:simple-example}

For illustrating the new ideas, it is illuminating to look at a simple
example that exhibits the kind of phenomena that we have to cope with
in the Kirchhoff plate boundary problem.

\begin{myexample}
  \label{ex:simple-example}
  Let us start with the \emph{intuitive but mathematically unprecise}
  statement of the following example: Given a ``suitable'' forcing
  function~$f$ on the unit interval~$I=[0,1]$, we want to find a
  ``reasonable'' solution function~$u$ such that
  \begin{equation}
    \label{eq:simple-bvp}
    \bvp{u'' + \tfrac{1}{x} u' - \tfrac{1}{x^2} u = f,}{u(0)=u(1)=0.}
  \end{equation}
  But note that the differential operator~$T = D^2 + \tfrac{1}{x} D -
  \tfrac{1}{x^2}$ is singular at the left boundary point~$x=0$ of the
  interval~$I$ under consideration. Hence the first boundary
  condition~$u(0)=0$ should be looked at with some suspicion. And what
  function space are we supposed to consider in the first place? If
  the~$\tfrac{1}{x}$ and~$\tfrac{1}{x^2}$ are to be taken literally,
  the space~$C^\infty[0,1]$ will clearly not do. On the other hand, we
  need functions that are smooth (or at least continuous) at~$x=0$ to
  make sense of~$u(0)$.
\end{myexample}

In the rest of this section, we shall give one possible solution to
the dilemma outlined above. Of course we could resort to using
different function space for~$u$ and~$f$, and this is in fact the
approach one usually takes in Analysis. For our present purposes,
however, we prefer to keep the simple paradigm of integro-differential
algebras as outlined in Section~\ref{sec:intro}, but we shall modify
it to accommodate singularities such as in~$\tfrac{1}{x}$
and~$\tfrac{1}{x^2}$. Note that these are just poles, so we can
take~$\galg$ to be the subring of the field~$\mer{I}$ consisting of
complex-valued \emph{meromorphic functions} that are regular at all~$x
\in I$ except possibly~$x=0$. In other words, these are functions that
have a Laurent expansion at~$x=0$ with finite principal part,
converging in a complex annulus~$0 < \abs{z} < \rho$ with~$\rho >
1$. In fact, we will only use the real part~$[-1,1] \setminus \{0\}$
of this annulus.  Note that we have of course~$\tfrac{1}{x},
\tfrac{1}{x^2} \in \galg$.

The ring~$\galg$ is an integro-differential algebra over~$\CC$ if we
use the standard \emph{derivation}~$\der = \tfrac{d}{dx}$ and the
\emph{Rota-Baxter operator}
\begin{equation*}
  \cum f := \cum_1^x f(\xi) \, d\xi
\end{equation*}
initialized at the regular point~$x=1$. 

We have now ensured that the differential operator
of~\eqref{eq:simple-bvp} has a clear algebraic interpretation~$T \in
\galg[\der]$. However, the boundary condition~$u(0)=0$ is still
dubious. For making it precise, note that the integro-differential
algebra~$(\galg, \der, \cum)$ has only the second of the two boundary
evaluations~$L, R\colon \galg \to \CC$ with~$L(f) = f(0)$ and~$R(f) =
f(1)$ in the usual sense of a total function. So while we can
interpret the second boundary condition algebraically by~$Ru = 0$, the
same does not work on the left endpoint. Instead of an evaluation
at~$x=0$ we shall introduce the map
\begin{equation*}
  \pp\colon \sum_{n=N}^\infty a_n x^n \mapsto \sum_{n=N}^{-1} a_n x^n
\end{equation*}
that extracts the \emph{principal part} of a function written in terms
of its Laurent expansion at~$x=0$. Here and henceforth we assume such
expansions of nonzero functions are written with~$a_N \neq
0$. If~$N\ge0$ the function is regular at~$x=0$, and the above sum is
to be understood as zero. Clearly, $\pp\colon \galg \to \galg$ is a
linear projector, with the complementary projector~$\reg := 1_\galg -
\pp$ extracting the \emph{regular part} at~$x=0$. Incidentially, $\pp$
and $\reg$ are also Rota-Baxter operators of weight~$-1$, which play
a crucial role in the renormalization theory of perturbative quantum
field theory~\cite[Ex.~1.1.10]{Guo2012}.

Finally, we define the functional~$C\colon \galg \to \CC$ that
extracts the \emph{constant term}~$a_0$ of a meromorphic function
expanded at~$x=0$. Combining~$C$ with the monomial multiplication
operators, we obtain the coefficient functionals~$\coeffex{n} := C
x^{-n} \: (n \in \ZZ)$ that map~$\sum_n a_n x^n$ to~$a_n$. In
particular, the residue functional is given by~$\coeffex{-1} = Cx$.

Note that for functions regular at~$x=0$, the functional~$C$ coincides
with the evaluation at the left endpoint, $L\colon \galg \to \CC, f
\mapsto f(0)$. However, for general meromorphic functions, $L$ is
undefined and $C$ is \emph{not multiplicative} since for example~$C(x
\cdot \tfrac{1}{x}) = 1 \neq 0 = C(x) \, C(\tfrac{1}{x})$. Hence we
refer to~$C$ only as a functional but not as an evaluation.

We can now make the boundary condition~$u(0) = 0$ precise for our
setting over~$\galg$. What we really mean is that~$\lambda(u) = 0$,
where~$\lambda := \pp + C$ is the projector that extracts the
principal part together with the constant term. Extending the
algebraic notion of boundary problem to allow for boundary conditions
that are not functionals, we may thus view~\eqref{eq:simple-bvp}
as~$(D^2 + \tfrac{1}{x} D - \tfrac{1}{x^2}, \orth{[\lambda, R]})$. In
this way we have given a precise meaning to the \emph{formulation of
  the boundary problem}.

But how are we to go about its \emph{solution} in a systematic manner?
Let us first look at the adhoc \emph{standard method} way of doing
this---determining the general homogeneous solution, then add the
inhomogeneous solution via variation of constants, and finally adapt
the integration constants to accommodate the boundary conditions. In
our case, one sees immediately that~$\Ker(T) = [x, \tfrac{1}{x}]$ so
that the general solution of the homogeneous differential equation
is~$u(x) = c_1 x + \tfrac{c_2}{x}$, where~$c_1, c_2 \in \CC$ are
integration constants. Variation of
constants~\cite[p.~74]{CoddingtonLevinson1955} then yields
\begin{equation}
  \label{eq:var-const}
  u(x) = c_1 x + \frac{c_2}{x} + \int_1^x \big(\frac{x}{2} -
  \frac{\xi^2}{2x}\big) \, f(\xi) \, d\xi
\end{equation}
for the inhomogeneous solution. Note that~$f \in \galg$ may also have
singularities as~$x=0$ or any other point~$x \in I$ apart from~$x=1$.

Now we need to impose the \emph{boundary conditions}. From~$u(1) = 0$
we obtain immediately~$c_1 + c_2 = 0$. For the boundary condition
at~$x=0$ we have to proceed a bit more cautiously, obtaining
\begin{align*}
  u(0) &= \lim_{x \to 0} \Big( c_1 x + \frac{c_2}{x} + \frac{x}{2}
  \int_1^x f(\xi) \, d\xi - \frac{1}{2x} \, \int_1^x
  \xi^2 f(\xi) \, d\xi \Big)\\
  &= \lim_{x \to 0} \Big( \frac{c_2}{x} - \frac{1}{2x} \, \int_1^x
  \xi^2 f(\xi) \, d\xi \Big)
\end{align*}
where we assume that~$f$ is regular at~$0$. It is clear that the
remaining limit can only exist if the integral tends to a finite limit
as~$x \to 0$, and this is the case by our assumption on~$f$. But then
we may apply the boundary condition to obtain~$c_2 = \tfrac{1}{2} \,
\cum_{\!1}^0 \, \xi^2 f(\xi) \, d\xi = - \tfrac{1}{2} \, \cum_{\!0}^1
\, \xi^2 f(\xi) \, d\xi$. This gives the overall solution
\begin{equation}
  \label{eq:simple-sol}
  u(x) = \bigg( \frac{x}{2} \int_0^1 \xi^2  - \frac{1}{2x}
  \int_0^1 \xi^2 {} - \frac{x}{2} \int_x^1 \! + \, \frac{1}{2x} \int_x^1
  \xi^2 \bigg) f(\xi) \, d\xi,
\end{equation}
which one may write in the standard form~$u(x) = \cum_0^1 \, g(x,\xi) \,
f(\xi) \, d\xi$ where the \emph{Green's function} is defined as
\begin{equation}
  \label{eq:simple-gf}
  g(x,\xi) = 
  \begin{cases}
    \tfrac{x\xi^2}{2} - \tfrac{\xi^2}{2x} & \text{if $\xi \le x$}\\
    \tfrac{x\xi^2}{2} - \tfrac{x}{2}      & \text{if $\xi \ge x$}
  \end{cases}
\end{equation}
in the usual manner.

How are we to make sense of this in an algebraic way, i.e.\@ without
(explicit) use of limits and hence topology? The key to this lies in
the projector~$\pp$ and the functional~$L$, which serve to distill
into our algebraic setting what we need from the topology
(namely~$f(x) = (\pp \, f)(x) + O(1)$ as $x \to 0$). However, there is
another complication when compared to boundary problems without
singularities as presented in Section~\ref{sec:intro}: We cannot
expect a solution~$u \in \galg$ to~\eqref{eq:simple-bvp} for every
given forcing function~$f \in \galg$. In other words, this boundary
problem is \emph{not regular} in the sense
of~\cite{RosenkranzRegensburger2008a}.

In the past, we have also used the term \emph{singular boundary
  problem} for such situations (here this seems to be suitable in a
double sense but we shall be careful to separate the second sense by
sticking to the designation ``boundary problems with
singularities''). The theory of singular boundary problems was
developed in an abstract setting in~\cite{Korporal2012}; applications
to boundary problems (without singularities) have been presented
in~\cite{KorporalRegensburgerRosenkranz2011}. At this point we shall
only recall a few basic facts.

A boundary problem~$(T, \bspc)$ is called \emph{semi-regular}
if~$\Ker(T) \cap \orth{\bspc} = O$. It is easy to see that the
boundary problem~$(D^2 + \tfrac{1}{x} D - \tfrac{1}{x^2},
\orth{[\lambda, R]})$ is in fact semi-regular. Since any~$u \in
\Ker(T)$ can be written as~$u(x) = c_1 x + \tfrac{c_2}{x}$, the
condition~$Ru = 0$ implies~$c_2 = -c_1$ and hence~$u(x) = c_1
(x-\tfrac{1}{x})$. But then~$(\lambda u)(x) = -\tfrac{c_1}{x} = 0$
forces~$c_1 = 0$ and hence~$u=0$.

If~$(D^2 + \tfrac{1}{x} D - \tfrac{1}{x^2}, \orth{[\lambda, R]})$ were
a regular boundary problem, we would have~$\Ker(T) \dirs
\orth{[\lambda, R]} = \galg$. However, it is easy to see that there
are elements~$u \in \galg$ that do not belong to~$\Ker(T) +
\orth{[\lambda, R]}$, for example~$u(x) = \tfrac{1}{x^2}$. Hence we
conclude that the boundary problem~\eqref{eq:simple-bvp} is in fact
overdetermined. For such boundary problems~$(T, \bspc)$ one can always
select a \emph{regular subproblem}~$(T, \tilde{\bspc})$, in the sense
that~$\tilde{\bspc} < \bspc$. In our case, a natural choice
is~$\tilde{\bspc} = [\coeffex{-1}, R]$. This is regular since the
evaluation matrix
\begin{equation*}
  (\coeffex{-1}, R)(\tfrac{1}{x}, x) = 
  \begin{pmatrix}
    1 & 0\\
    1 & 1
  \end{pmatrix}
\end{equation*}
is regular, and the associated kernel projector is~$P = \tfrac{1}{x}
\, \coeffex{-1} + x \, (R - \coeffex{-1})$
by~\cite[Lem.~A.1]{RegensburgerRosenkranz2009}.

For making the boundary problem~\eqref{eq:simple-bvp} well-defined
on~$\galg$ we need one more ingredient: We have to fix a
complement~$\espc$ of~$T(\orth{\bspc})$, the so-called
\emph{exceptional space}. Intuitively speaking, this comprises the
``exceptional functions'' of~$\galg$ that we decide to discard in
order to render~\eqref{eq:simple-bvp} solvable. Let us first work out
what~$T(\orth{\bspc}) \le \galg$ looks like. Since~$u(x) = \sum_{n\ge
  N}^\infty a_n x^n \in \orth{\bspc}$ forces the principal part and
constant term to vanish, we may start from~$u(x) = \sum_{n > 0} a_n
x^n$, with the additional proviso that~$\sum_{n>0} a_n = 1$.
Applying~$T = D^2 + \tfrac{1}{x} D - \tfrac{1}{x^2}$ to this~$u(x)$
yields~$\sum_{n>1} a_n \, (n^2 - 1) \, x^{n-2}$, which represents an
arbitrary element of~$C^\omega(I) = \orth{[\pp]}$ for a suitable
choice of coefficients~$(a_n)_{n>1}$ since the additional
condition~$\sum_{n>0} a_n = 1$ can always be met by choosing~$a_1 = 1
- \sum_{n>1} a_n$. But then it is very natural to choose~$\espc =
\orth{[\reg]}$ as the required complement. Clearly, the elements of
this space~$\espc$ are the Laurent polynomials of~$\CC[\tfrac{1}{x}]$
without constant term.

By Prop.~2 of~\cite{KorporalRegensburgerRosenkranz2011}, the
\emph{Green's operator} of a generalized boundary problem~$(T, \bspc,
\espc)$ is given by~$G = \tilde{G} Q$, where~$\tilde{G}$ is the
Green's operator of some regular subproblem~$(T, \tilde{\bspc})$
and~$Q$ is the projector onto~$T(\orth{\bspc})$ along~$\espc$. In our
case, the latter projector is clearly~$Q = \reg$, while the Green's
operator~$\tilde{G} = (1-P) \fri{T}$
by~\cite[Thm.~26]{RosenkranzRegensburger2008a}, with the kernel
projector~$P$ as above and the fundamental right inverse~$\fri{T}$
given according to~\cite[Prop.~23]{RosenkranzRegensburger2008a} by
\begin{equation*}
  \fri{T} = \tfrac{1}{2} \, A_1 x - \tfrac{1}{2x} \, A_1 x^2,
\end{equation*}
which is essentially just a reformulation of the inhomogeneous part
in~\eqref{eq:var-const}. Following the style of~\cite{Rosenkranz2005}
we write here~$\cum_1^x$ as~$A_1 \in \intdiffop$ to emphasize its role
in the integro-differential operator ring. Similarly, we write~$F :=
-LA_1 = \cum_0^1$ for the definite integral over the full
interval~$I$, which we may regard as a linear functional~$C^\omega(I)
\to \CC$.

Putting things together, it remains to compute
\begin{align*}
  G &= (1-P) \, \fri{T} Q = \big(1-\tfrac{1}{x} \, \coeffex{-1} + x \,
  \coeffex{-1} - x R\big) (\tfrac{1}{2} \, A_1 x - \tfrac{1}{2x} \, A_1
  x^2) \, \reg\\
  &= \big( -\tfrac{x}{2} A_1 + \tfrac{1}{2x} \, A_1 x^2 - \tfrac{1}{2x} \, F
  x^2 + \tfrac{x}{2} \, F x^2 \big) \reg,
\end{align*}
which may be done by the usual \emph{rewrite
  rules}~\cite[Tbl.~1]{RosenkranzRegensburger2008a} for the operator
ring~$\intdiffop$, together with the obvious extra rules that
on~$\Img(\reg) = C^\omega(I)$ the residual~$\coeffex{-1}$ vanishes
and~$C = \coeffex{0}$ coincides with the evaluation~$L$ at zero. Using
the standard procedure for extracting the \emph{Green's function}, one
obtains exactly~\eqref{eq:simple-gf} when restricting the forcing
functions to~$f \in C^\omega(I)$. We have thus succeeded in applying
the algebraic machinery to regain the solution previoulsy determined
by analysis techniques. More than that: The precise form of accessible
forcing functions is now fully settled, whereas the regularity
assumption in~\eqref{eq:simple-sol} was left somewhat vague (a
sufficient condition whose necessity was left open).

\section{Two-Point Boundary Problems with One Singularity}
\label{sec:bp-one-sing}

Let us now address the general question of specifying and solving
boundary problems (as usual: relative to a given fundamental system)
that have only \emph{one singularity}. The case of multipliple
singularities is left for future investigations. Using a scaling
transformation (and possibly a reflection), we may thus assume the
same setting as in Section~\ref{sec:simple-example}, with the
\emph{singularity at the origin} and the other boundary point at~$1$.

For the scope of this paper, we shall also restrict ourselves to a
certain subclass of Stieltjes boundary problems~$(T,\bspc)$: First of
all, we shall allow \emph{only local boundary conditions}
in~$\bspc$. This means multi-point conditions and higher-order
derivatives (leading to ill-posed boundary problems with
distributional Green's functions) are still allowed, but no global
parts (integrals); for details we refer
to~\cite[Def.~1]{RosenkranzSerwa2015}. The second restriction concerns
the differential operator~$T \in \CC(x)[\der]$, which we require to be
\emph{Fuchsian without resonances}. The latter means the differential
equation~$Tu=0$ is of Fuchsian type (the singularity is regular), and
has fundamental solutions~$x^{\lambda_1} \phi_1(x), \dots,
x^{\lambda_n} \phi_n(x)$ with each~$\phi_i \in C^\omega(I)$ having
order~$0$, where~$\lambda_1, \dots, \lambda_n$ are the roots of the
indicial equation~\cite[p.~127]{CoddingtonLevinson1955}. In other
words, we do not require logarithms for the solutions. (A
sufficient---but not necessary---condition for this is that the
roots~$\lambda_i$ are all distinct and do not differ by integers.)

\begin{mydefinition}
  We call~$(T, \bspc)$ a \emph{boundary problem with mild singularity}
  if~$T \in \CC(x)[\der]$ is a nonresonant Fuchsian operator
  and~$\bspc$ a local boundary space.
\end{mydefinition}

For a fixed~$(T, \bspc)$, we shall then enlarge the \emph{function
  space}~$\galg$ of Section~\ref{sec:simple-example} by
adding~$x^{\mu_1}, \dots, x^{\mu_n}$ as algebra generators, where
each~$\mu_i$ is the fractional part of the corresponding indicial
root~$\lambda_i$. Every element of~$\galg$ is then a sum of
series~$x^{\mu} \sum_{n \ge N} a_n x^n$, with~$\mu \in \{\mu_1,
\dots, \mu_n \}$ and~$N \in \ZZ$. The integro-differential structure
on~$(\galg, \der, \cum)$ is determined by setting~$\der x^{\mu} =
\lambda_i \, x^{\mu-1}$, as usual, and by using for~$\cum$ the
integral~$\cum_1^x$ as we did also in
Section~\ref{sec:simple-example}.

As \emph{boundary functionals} in~$\bspc$, we admit
derivatives~$\evl_\xi D^l\; (0 < \xi \le 1, l \ge 0)$ and coefficient
functionals~$\coeffex{k+\mu} \; (k \in \ZZ)$ whose action is~$x^{\mu}
\sum_{n \ge N} a_n x^n \mapsto a_k$. For functions~$f \in C^\omega(I)$
we have of course~$\coeffex{k+\mu} \, x^\mu f = f^{(k)}(0)/k!$. Since
the projectors~$\reg$ and~$\pp$ can be expressed in terms of the
coefficient functionals, we shall henceforth regard the latter just as
convenient abbrevations; boundary spaces are always written in terms
of~$\evl_\xi$ and~$\coeffex{k}$ only but of course they can be
infinite-dimensional. For example, in Section~\ref{sec:simple-example}
we had the ``regularized boundary condition'' $\lambda(u) = 0$, which
is equivalent to~$Lu = 0$ and~$\pp(u) = 0$ and hence to~$\coeffex{k}
\, u = 0 \; (k \le 0)$. Its full boundary space is therefore~$\bspc =
[R, \coeffex{k} \mid k \le 0]$.

The first issue that we must now address is the choice of
\emph{suitable boundary conditions}: Unlike in the ``smooth case''
(without singularities), we may not be able to impose $n$~boundary
conditions for an $n$-th order differential equation. The motivating
example of Section~\ref{sec:simple-example} was chosen to be
reasonably similar to the smooth case, so the presence of a
singularity was only seen in replacing the boundary evaluation~$L$ by
its regularized version~$\lambda$. As explained above, we were
effectively adding the extra condition~$\pp(u) = 0$ to the standard
boundary conditions~$u(0) = u(1) = 0$. In other cases, this will not
do as the following simple example shows.

\begin{myexample}
  \label{ex:ivp}
  Consider the nonresonant Fuchsian differential equation~$Tu(x) :=
  u'' + \tfrac{4}{x} \, u' + \tfrac{2}{x^2} \, u = 0$. Note that here
  the indicial equation has the roots~$\lambda_1 = -2$ and~$\lambda_2
  = -1$, which differ by the integer~$1$. Nevertheless, we may
  take~$\{ \tfrac{1}{x}, \tfrac{1}{x^2} \}$ as a fundamental system,
  so~$T$ is indeed nonresonant.

  Trying to impose the same (regularized) boundary
  space~$\tilde{\bspc} = [\pp, L, R]$ as in
  Example~\ref{ex:simple-example}, one
  obtains~$$T(\orth{\tilde{\bspc}}) = \Big\{ \sum_{n=-1}^\infty b_n
  x^n \mid \sum_{n=-1}^\infty \tfrac{b_n}{(n+3)(n+4)} = 0 \Big\}$$
  after a short calculation. But this means that the forcing
  functions~$f$ in the boundary problem
  \begin{equation*}
    \bvp{u'' + \tfrac{4}{x} \, u' + \tfrac{2}{x^2} \, u = f}{%
      \pp(u) = u(0) = u(1) = 0}
  \end{equation*}
  must satisfy an awkward extra condition (viz.\@ the one on the
  right-hand side of~$T(\orth{\tilde{\bspc}})$ above). This is not
  compensated by the slightly enlarged generality of allowing~$f$ to
  have a simple pole at~$x=0$.

  In the present case, we could instead impose \emph{initial
    conditions} at~$0$ so that~$\tilde{\bspc} = [\pp, L, LD]$. In this
  case one gets~$T(\orth{\tilde{\bspc}}) = C^\omega(I)$, so there is a
  unique solution for every analytic forcing function.
\end{myexample}

For a given nonresonant Fuchsian operator~$T \in \CC(x)[\der]$, a
better approach appears to be the following (we will make this more
precise below):
\begin{enumerate}
\item We compute first some boundary functionals~$\beta_1, \dots,
  \beta_n$ that ensure a \emph{regular subproblem}~$(T, \bspc)$
  with~$\bspc_n := [\beta_1, \dots, \beta_n]$. Adding extra conditions
  (vanishing of all~$\coeffex{k+\mu_i}$ for sufficiently small~$k$) we
  obtain a boundary space~$\bspc = \bspc_n + \cdots$ such that~$(T,
  \bspc)$ is semi-regular.
\item If a \emph{particular boundary condition} is desired, it may be
  ``traded'' against one of the~$\beta_i$; if this is not possible, it
  can be ``annexed'' to the extra conditions. After these amendments,
  the subproblem~$(T, \bspc_n)$ is still regular, and~$(T, \bspc)$
  still semi-regular.
\item Next we compute the corresponding \emph{accessible
    space}~$T(\orth{\bspc})$. This space might not
  contain~$C^\omega(I)$, as we saw in Example~\ref{ex:ivp} above when
  we insisted on the conditions~$u(0) = u(1)$.
\item We determine a complement~$\espc$ of~$T(\orth{\bspc})$ as
  \emph{exceptional space} in~$(T, \bspc, \espc)$.
\end{enumerate}

Once these steps are completed, we have a regular generalized boundary
problem~$(T, \bspc, \espc)$ whose \emph{Green's operator}~$G$ can be
computed much in the same way as in
Section~\ref{sec:simple-example}. In detail, we get~$G = \tilde{G} Q$,
where~$\tilde{G}$ is the Green's operator of the regular
subproblem~$(T, \bspc_n)$ and~$Q$ the projector onto~$T(\orth{\bspc})$
along~$\espc$. As we shall see, the operators~$G$ and~$Q$ can be
computed as in the usual setting~\cite{RosenkranzRegensburger2008a}.
Let us first address Step~1 of the above program.

\begin{mylemma}
  Let~$T \in \CC(x)[\der]$ be a nonresonant Fuchsian differential
  operator of order~$n$. Then there exists a fundamental system~$u_1,
  \dots, u_n \in \galg$ of~$T$ and~$n$ coefficient
  functionals~$\beta_1 := \coeffex{\mu_1+k_1}, \dots, \beta_n :=
  \coeffex{\mu_n+k_n}$ ordered as~$k_1+\mu_1 < \cdots < k_n+\mu_n$
  so that~$\beta(u) \in \CC^{n \times n}$ is a lower unitriangular
  matrix.
\end{mylemma}

\begin{proof}
  We start from an arbitrary fundamental system $$u_1 = x^{\mu_1}
  \sum_{k \ge k_1} a_{1,k} x^k, \dots, u_n = x^{\mu_n} \sum_{k \ge
    k_n} a_{k \ge k_n} x^k$$ of the Fuchsian operator~$T$, where we
  take~$\mu_1, \dots, \mu_n$ fracational as before and we may assume
  that~$a_{1,k_1}, \dots, a_{n,k_n} = 1$ so that each fundamental
  solution~$u_i$ has order~$k_i$. (The order of a series~$u = x^\mu
  \sum_{k \ge N} a_k \, x^k$ is defined as the smallest integer~$k$
  such that~$\coeffex{k} \, u \neq 0$.) We order the fundamental
  solutions such that~$k_1+\mu_1 \le \cdots \le k_n+\mu_n$. We can
  always achieve strict inequalities as follows. If~$i<n$ is the first
  place where~$k_i+\mu_i = k_{i+1}+\mu_{i+1}$ we must also have~$\mu_i
  = \mu_{i+1}$ since~$0 \le \mu_i, \mu_{i+1} < 1$. Therefore we
  have~$k_i = k_{i+1}$, and we can replace~$u_{i+1}$ by~$u_{i+1} -
  u_i$ and make it monic so as to ensure~$k_i+\mu_i <
  k_{i+1}+\mu_{i+1}$. Repeating this process at most~$n-1$ times we
  obtain~$k_1+\mu_1 < \cdots < k_n+\mu_n$. Choosing now the boundary
  functionals~$\beta_1 := \coeffex{\mu_1+k_1}, \dots, \beta_n :=
  \coeffex{\mu_n+k_n}$ as in the statement of the lemma, we have
  clearly~$\beta_i(u_i) = 1$ and~$\beta_i(u_j) = 0$ for~$j>i$ as
  claimed.
\end{proof}

In particular we see that~$E_n := (\beta_1, \dots, \beta_n)(u_1,
\dots, u_n)$ has unit determinant, so it is regular. Setting~$\bspc_n
:= [\beta_1, \dots, \beta_n]$, we obtain a regular boundary
problem~$(T, \bspc_n)$. Note that some of the~$\mu_i$ may
coincide. For each~$\mu \in M := \{ \mu_1, \dots, \mu_n\}$ let~$k_\mu$
be the smallest of the~$k_i$ with~$\mu = \mu_i$. We expand the
$n$~boundary functionals by suitable curbing constraints to the
full boundary space
\begin{equation}
  \label{eq:curb-constr}
  \bspc := \bspc_n + [ \coeffex{k+\mu} \mid \mu \in M, \: k<k_\mu ]
\end{equation}
since the inhomogeneous solutions should be at least as smooth (in the
sense of pole order) as the homogeneous ones. Note that~$(T, \bspc)$
is clearly a semi-regular boundary problem. This achieves Step~1 in
our program.

Now for Step~2. Suppose we want to \emph{impose a boundary
  condition}~$\beta$, assuming it is of the type discussed above
(composed of derivatives~$\evl_\xi D^l$ and coefficient
functionals~$\coeffex{k+\mu}$ for fractional parts~$\mu$ of indicial
roots). We must distinguish two cases:
\begin{description}
\item[Trading.] If the row vector~$r := \beta(u_i)_{i = 1, \dots, n}
  \in \CC^n$ is nonzero, we can express it as a $\CC$-linear
  combination~$c_1 r_1 + \cdots + c_n r_n$ of the rows~$r_1, \dots,
  r_n$ of~$E_n$. Let~$k$ be the largest index such that~$c_k \neq
  0$. Then we may express~$r_k$ as a $\CC$-linear combination of~$r$
  and the remaining rows~$r_i \; (i \neq k)$, hence we may
  exchange~$\beta_k$ with~$\beta$ without destroying the regularity
  of~$E_n = (\beta_1, \dots, \beta_n)(u_1, \dots, u_n)$.
\item[Annexation.] Otherwise, we have~$\Ker(T) \le
  \orth{\beta}$. Together with~$\Ker(T) \dirs \orth{\bspc_n} = \galg$
  this implies~$\orth{\smash{\big([\beta] \cap \bspc_n\big)}\!} =
  \orth{\beta} + \orth{\bspc_n} = \galg$ and hence~$\beta \not\in
  \bspc_n$ by the identities
  of~\cite[App.~A]{RegensburgerRosenkranz2009}. Furthermore,
  Lemma~4.14 of~\cite{Korporal2012} yields
  \begin{equation*}
    T( \orth{\bspc_n+\beta} ) = 
    T( \orth{\bspc_n} \cap \orth{\beta} ) = T(\orth{\bspc_n}) \cap
    T(\orth{\beta}) = T(\orth{\beta})
  \end{equation*}
  since we have~$T(\orth{\bspc_n}) = \galg$ from the regularity
  of~$(T, \bspc)$. But this means that adding~$\beta$ to~$\bspc$ as a
  new boundary condition necessarily cuts down the space of accessible
  functions unless~$\beta$ happens to be in the span of the curbing
  constraints~$\coeffex{k+\mu} \in \bspc$ added to~$\bspc_n$
  in~\eqref{eq:curb-constr}.
\end{description}
In the sense of the above discussion (see Example~\ref{ex:ivp}), the
first case signifies a ``natural'' choice of boundary condition while
the second case means we insist on imposing an extra condition (unless
it is a redundant curbing constraint). Repeating these steps as the
cases may be, we can successively impose any (finite) number of given
boundary conditions. This completes Step~2.

For Step~3 we require the computation of the accessible
space~$T(\orth{\bspc})$. We shall now sketch how this can be done
algorithmically, starting with a finitary description of the
\emph{admissible space}~$\orth{\bspc}$ as given in the next
proposition. The proof is unfortunately somewhat tedious and
long-wided but the basic idea is simple enough: We substitute a series
ansatz into the boundary conditions specified in~$\bspc$ to determine
a number of lowest-order coefficients. The rest is just some
bureaucracy for making sure that everything works out (most likely
this could also be done in a more effective way).

\begin{myproposition}
  \label{prop:char-adspc}
  Let~$(T, \bspc)$ be a semi-regular boundary problem of order~$n$
  with mild singularity such that~$(T,\bspc_n)$ is a regular
  subproblem. Let~$M = \{\mu_1, \dots, \mu_n \}$ be the fractional
  parts of the indicial roots for~$T$. Then we have a direct
  decomposition of the admissible space~$\orth{\bspc} = \bigoplus_{\mu
    \in M} x^\mu \aspc_\mu$ with components
  \begin{equation}
    \label{eq:fin-repr}
    \aspc_{\mu} = [p_{\mu 1}(x), \dots, p_{\mu \, l_{\mu}}\!(x)] +
    P_{\mu} \big( C^\omega(I)^M \big) \qquad (\mu \in M).
  \end{equation}
  Here $p_{\mu 1}(x), \dots, p_{\mu \, l_{\mu}}(x) \in \CC[x,
  \tfrac{1}{x}]$ are linearly independent Laurent polynomials and the
  linear operators~$P_{\mu}\colon C^\omega(I)^M \to C^\omega(I)$ are
  defined by
  \begin{equation*}
    P_{\mu}\big( b(x) \big) = x^{j_{\mu}} b_{\mu}(x) + \sum_{\nu
      \in M} \sum_{\xi,j} q_{\mu \nu \xi j}(x) \, \evl_\xi D^j(b_\nu),
  \end{equation*}
  with $j_\mu \in \ZZ$ and Laurent polynomials~$q_{\mu \nu \xi j}(x)
  \in \CC[x, \tfrac{1}{x}]$, almost all of which vanish over the
  summation range~$0 < \xi \le 1$ and~$j \ge 0$.
\end{myproposition}

\begin{proof}
  Any element~$u \in \galg$ may be written in
  the form~$$u = \sum_{\mu \in M} \sum_{k=k(\mu)}^\infty a_{\mu,k} \,
  x^{k+\mu},$$ where the~$k(\mu) \in \ZZ$ are \emph{a priori}
  arbitrary. However, because of the curbing constraints of~$\bspc$
  in~\eqref{eq:curb-constr} we may assume~$k(\mu) = k_\mu$ for~$u \in
  \orth{\bspc}$. Let us write~$\bspc_+$ for the extra conditions that
  were ``annexed'' at Step~2 (if none were added then clearly~$\bspc_+
  = O$); thus~$\bspc$ is a direct sum of~$\bspc_n$ and~$\bspc_+$ and
  the curbing constraints. If~$s_\mu$ denotes the largest integer such
  that~$\coeffex{s+\mu}$ occurs in any condition of~$\bspc_n +
  \bspc_+$ and if we set~$r_\mu := s_\mu+1+\mu$, we can also write~$u
  \in \orth{\bspc}$ as
  \begin{equation}
    \label{eq:ansatz}
    u = \sum_{\mu \in M} \sum_{k=k_\mu}^{s_\mu} a_{\mu k} x^{k+\mu} + 
    \sum_{\mu \in M} x^{r_\mu} \, b_\mu(x),
  \end{equation}
  where the~$b_\mu(x) \in C^\omega(I)$ are convergent power
  series. Next we impose the conditions~$\beta \in \bspc_n + \bspc_+$
  on~$u$. But each~$\beta$ is a $\CC$-linear combintions of
  coefficient functionals~$\coeffex{k+\mu}$ with~$u \mapsto a_{\mu k}$
  and derivatives~$\evl_\xi D^l\: (0 < \xi \le 1, l \ge 0)$ whose
  action on~$u$ yields
  \begin{equation*}
    \sum_\mu \sum_k a_{\mu k} \, \fafac{(k+\mu)}{l} \, \xi^{k+\mu-l} 
    + \sum_\mu \sum_{j=0}^l \binom{l}{j} \fafac{r_\mu}{j} \,
    \xi^{r_\mu-j} \, b_\mu^{(l-j)}(\xi),
  \end{equation*}
  Hence~$\beta(u)$ yields a~$\CC$-linear combination of
  the~$a_{\mu k}$ and the~$b_\mu^{(j)}(\xi)$. We compile the
  coefficients~$a_{\mu k} \in \CC$ into a column vector~$\hat{a} \in
  \CC^S$ of size~$S := \sum_\mu (s_\mu-k_\mu+1)$, consisting of~$|M|$
  contiguous blocks of varying size~$s_\mu-k_\mu+1$. Putting the
  (finite) set~$\Xi$ of evaluation points occurring in~$\bspc_n +
  \bspc_+$ into ascending order, we compile also the
  derivatives~$b_\mu(\xi), b_\mu'(\xi), \dots$ into a column
  vector~$\hat{b} \in \CC^T$ of size~$T := \sum_\mu t_\mu$, consisting
  of~$|M|$ contiguous blocks, each holding~$t_\mu$
  derivatives~$\smash{b_\mu^{(j)}}(\xi)$ for a fixed~$\mu \in M$ and
  certain~$\xi \in \Xi$ and~$j \ge 0$. Assuming~$\bspc_n + \bspc_+$
  has dimension~$R$, the boundary conditions~$\beta(u) = 0$ for~$\beta
  \in \bspc_n + \bspc_+$ can be written as
  \begin{equation}
    \label{eq:bc-eqns}
    \hat{A} \hat{a} = \hat{B} \hat{b}
  \end{equation}
  for suitable matrices~$\hat{A} \in \CC^{R \times S}$ and~$\hat{B}
  \in \CC^{R \times T}$ that can be computed from the boundary
  functionals~$\beta \in \bspc_n + \bspc_+$.

  Regarding the right-hand side as given, let us now
  put~\eqref{eq:bc-eqns} into row echelon form (retaining the same
  letters for simplicity). If the resulting system contains any rows
  that are zero on the left but nonzero on the right, this signals
  constraints amongst the~$b_\mu(x)$ whose treatment is postponed
  until later. For the moment we discard such rows as well rows that
  are zero on both sides. Let~$U \le R$ be the number of the remaining
  rows and~$p_1, \dots, p_U$ the pivot positions and~$V := S-U$ the
  number of free parameters. Then we can solve~\eqref{eq:bc-eqns} in
  the form~$\hat{a} = \tilde{a} + \tilde{A} \cdot
  \CC^V$. Here~$\tilde{a} \in \CC^S$ is the vector with
  entry~$(\hat{B} \hat{b})_j$ in row~$p_j$ and zero otherwise,
  and~$\tilde{A} \in \CC^{S \times V}$ consists of the non-pivot
  columns of the corresponding padded matrix (its $j$-th row is
  the~$i$-th row of~$\hat{A}$ for pivot indices $j=p_i$, and~$-e_j$
  for non-pivot indices~$j$). Writing the free parameters as~$\hat{v}
  = (v_1, \dots, v_V) \in \CC^V$, we may substitute this
  solution~$\hat{a}$ into the ansatz~\eqref{eq:ansatz} to obtain
  \begin{equation}
    \label{eq:new-ansatz}
    u = \sum_{\mu \in M} \sum_{k=k_\mu}^{s_\mu} a_{\mu k}(\hat{v},
    \hat{b}) \, x^{k+\mu} + \sum_{\mu \in M} x^{r_\mu} \, b_\mu(x),
  \end{equation}
  where the~$a_{\mu k}(\hat{v}, \hat{b})$ are~$\CC$-linear
  combinations of the free parameters~$\hat{v}$ and the
  derivatives~$\smash{b_\mu^{(j)}}(\xi)$ comprising~$\hat{b}$.  Note
  that we may regard the~$a_{\mu k}(\dots)$ as row vectors in~$\CC^{1
    \times (V+T)}$ whose entries are computed from~$\hat{A}$
  and~$\hat{B}$.

  We turn now to the issue of constraints amongst the~$b_\mu(x)$,
  embodied in those rows of the reduced row echelon form
  of~\eqref{eq:bc-eqns} that are zero on the left-hand side but
  nonzero on the right-hand side. We put the corresponding block
  of~$\hat{B}$ into reduced row echelon form (again retaining the same
  letters for simplicity). Let~$\mu'$ the smallest~$\mu$ occurring in
  any such row. Then each of the rows containing a
  derivative~$\smash{b_{\mu'}^{(j)}}(x)$ provides a constraint of the
  form
  \begin{equation}
    \label{eq:row-constr}
    \sum_{\xi \in \Xi} \sum_j c_{\xi j} \, b_{\mu'}^{(j)}(\xi) + \sum_{\xi \in \Xi}
    \sum_{\mu > \mu'} \sum_j c_{\xi\mu j} \, b_{\mu}^{(j)}(\xi) = 0,
  \end{equation}
  where the~$c_{\xi j}, c_{\xi\mu j} \in \CC$ are determined
  by~$\hat{B}$, and with finite sums over~$j$. Let the number of such
  constraints be~$X$. Collecting
  the~$\smash{\big(b_{\mu'}^{(j)}(\xi)\big)}_{\xi,l}$ into a
  vector~$\hat{b}_{\mu'} \in \CC^Y$ of size~$Y := t_{\mu'}$, we can
  write the constraints~\eqref{eq:row-constr} as matrix
  equation~$\hat{C} \hat{b}_{\mu'} = \hat{d}$ where the coefficient
  matrix~$\hat{C} \in \CC^{X \times Y}$ is determined by the~$c_{\xi
    j}$ of each constraint~\eqref{eq:row-constr}, and the right-hand
  side~$\hat{d} \in \CC^X$ by the corresponding~$c_{\xi\mu j}$ and
  the~$\smash{b_\mu^{(j)}}(\xi)$ for~$\mu>\mu'$, which for the moment
  we regard as known. Note that~$X<Y$ since~$\hat{C}$ is in row
  echelon form. Let its pivots be in the positions~$q_1, \dots, q_X$
  and set~$Z := Y-X$. Then we can write the solution
  as~$\hat{b}_{\mu'} = \tilde{b}_{\mu'} + \tilde{C} \cdot
  \CC^Z$. Here~$\tilde{b}_{\mu'} \in \CC^Y$ is the vector with
  entry~$\hat{d}_j$ in row~$q_j$ and zero otherwise, while~$\tilde{C}
  \in \CC^{Y \times Z}$ consists of the non-pivot columns of the
  padded matrix (its $j$-th row is the $i$-th row of~$\hat{C}$ for
  pivot indices $j=q_i$, and~$-e_j$ for non-pivot
  indices~$j$). Writing~$\hat{w} = (w_1, \dots, w_Z) \in \CC^Z$ for
  the corresponding free parameters, we may thus
  view~\eqref{eq:row-constr} as providing~$Y$ constraints
  \begin{equation}
    \label{eq:det-eqns}
    b_{\mu'}^{(j)}(\xi) = b_{\mu' \xi j}(\hat{w}, \hat{b}_+),
  \end{equation}
  where the~$b_{\mu'\xi j}(\hat{w}, \hat{b}_+)$ are~$\CC$-linear
  combinations of the free parameters~$\hat{w}$ and certain
  derivatives~$\smash{b_\mu^{(j)}}(\xi)$ that we have collected into a
  vector~$\hat{b}_+ \in \CC^{T-Y}$. Again we may view~$b_{\mu' \xi
    j}(\dots)$ as a row vector in~$\CC^{1 \times (Z+T-Y)}$ whose
  entries can ultimately be computed from~$\hat{A}$ and~$\hat{B}$.

  We regard now~\eqref{eq:det-eqns} as determining equations for
  fixing~$Y$ of the coefficients of~$b_{\mu'}(x) = \sum b_{\mu' k}
  x^k$. Indeed, if~$j$ is the highest derivative order occurring
  in~\eqref{eq:det-eqns} we may split according to~$b_{\mu'}(x) =
  \sum_{k \le j} b_{\mu' k} x^k + x^{j+1} \bar{b}_{\mu'}(x)$, with an
  arbitrary power series~$\bar{b}_{\mu'}(x)$, and then substitute this
  into~\eqref{eq:det-eqns} to obtain
  \begin{equation}
    \label{eq:coeff-det}
    b_{\mu' j} = b_{\mu' \xi j}(\hat{w}, \hat{b}_+) - \sum_{i=0}^j
    \binom{j}{i} \, \frac{j+1}{i+1} \, \xi^{i+1}
    \, \bar{b}_{\mu'}^{(i)}(\xi)
  \end{equation}
  for fixing one coefficient of~$b_{\mu'}(x)$. Now we substitute
  this~$b_j$ back into the above ansatz~$b_{\mu'}(x) = \sum_{k \le j}
  b_{\mu' k} x^k + x^{j+1} \bar{b}_{\mu'}(x)$, and we repeat the whole
  process for all other constraints~\eqref{eq:det-eqns}, each time
  determining the lowest unknown coefficient~$b_k$
  of~$b_{\mu'}(x)$. Of course, it may be necessary to expand the
  splitting to extract a larger polynomial part (if all the
  coefficients in the current polynomial part are
  determined). Eventually, we end up with
  \begin{equation}
    \label{eq:res-series}
    b_{\mu'}(x) = \sum_{k \le m_{\mu'}} b_{\mu' k}(\hat{w}, \hat{b}_+)
    \, x^k + x^{m_{\mu'}+1} \, \bar{b}_{\mu'}(x),
  \end{equation}
  with some break-off index~$m_{\mu'}$ that is specific
  to~$b_{\mu'}(x)$, and with~$\hat{b}_+$ enlarged to comprise also the
  derivatives~$\smash{\bar{b}_{\mu'}^{(j)}}(\xi)$ that were needed in
  determining the
  coefficients~\eqref{eq:coeff-det}. Substituting~\eqref{eq:res-series}
  into~\eqref{eq:new-ansatz}, we see that the~$b_{\mu' k}(\hat{w},
  \hat{b}_+)$ may be combined with the~$a_{\mu' k}(\hat{v}, \hat{b})$
  if we adjoin the parameters~$\hat{w}$ to the parameters~$\hat{v}$;
  then we can also rename the series~$\bar{b}_{\mu'}(x)$ back
  to~$b_{\mu'}(x)$. Of course new terms~$a_{\mu' k}(\hat{v}, \hat{b})
  \, x^{k+\mu'}$ may be created in the ansatz~\eqref{eq:new-ansatz},
  and its polynomial part may be expanded---but its overall form is
  not altered.

  We have now eliminated those constraints amongst the~$b_{\mu}(x)$
  occurring in~\eqref{eq:bc-eqns} that involve~$b_{\mu'}$,
  where~$\mu'$ was chosen minimal. Hence we are only left with
  constraints amongst the~$b_{\mu}(x)$ with~$\mu > \mu'$. Repeating
  the elimination process a finite number of times (at most~$|M|$
  eliminations are necessary), we obtain the generic form of~$u \in
  \orth{\bspc}$ as given in~\eqref{eq:new-ansatz}, where
  the~$b_\mu(x)$ are now arbitrary (convergent) power series. Since
  the terms with distinct factors~$x^\mu$ are clearly in distinct
  direct sum components~$x^\mu \aspc_\mu$, the latter may now be
  described by
  \begin{equation*}
    \aspc_\mu = \bigg\{ \sum_{k=k_\mu}^{s_\mu} a_{\mu k}(\hat{v},
    \hat{b}) \, x^k + x^{j_\mu} \, b_\mu(x) \biggm| 
    \double{\hat{v} \in \CC^V,}{b(x) \in C^\omega(I)^M} \bigg\} ,
  \end{equation*}
  where we have set~$j_\mu := s_\mu + 1$. Splitting the
  coefficients~$a_{\mu k}(\hat{v}, \hat{b})$ into a~$\CC$-linear
  combination of the free parameters~$(v_1, \dots, v_V)$ and a
  $\CC$-linear combination of the
  derivatives~$\smash{b_\nu^{(j)}}(\xi)$ comprising~$\hat{b}$, we
  collect terms whose coefficient is a specific parameter~$v_i \: (i =
  1, \dots, V)$ or a specific derivative~$\smash{b_\nu^{(j)}}(\xi) \:
  (\nu \in M, \xi \in \Xi, j \ge 0)$. This leads to
  \begin{equation*}
    \aspc_\mu = \bigg\{ \sum_{i=1}^V v_i \, p_{\mu i}(x) + x^{j_\mu}
    \, b_\mu(x) + \sum_{\nu \in M} \sum_{\xi, j} b_{\nu}^{(j)}(\xi) \,
    q_{\mu\nu\xi j}(x) \biggm| \double{\hat{v} 
    \in \CC^V,}{b(x) \in C^\omega(I)^M} \bigg\}
  \end{equation*}
  and hence to~\eqref{eq:fin-repr} by extracting a $\CC$-basis for
  each~$[p_{\mu 1}(x), \dots, p_{\mu V}(x)]$.
\end{proof}

We have now established Step~3 of our program since the accessible
space~$T(\orth{B})$ can be specified by applying~$T \in \CC[x,
\tfrac{1}{x}$ to the generic functions~\eqref{eq:fin-repr} of the
admissible space~$\orth{B}$. Our next goal is to find a
\emph{projector}~$Q$ onto~$T(\orth{B})$, which then gives the
exceptional space~$\espc := \Ker(Q) = \Img(1-Q)$ required for
Step~4. This will be easy once we have a corresponding projector
onto~$\orth{\bspc}$. In fact, the operator~$P_\mu$ in
Proposition~\ref{prop:char-adspc} is not quite a projector (for one
thing, it is not even an endomorphism), but in a sense it is not far
away from being one. For seeing this, note that we have
\begin{equation}
  \label{eq:dir-decomp}
  \galg = \bigoplus_{\mu \in M} \, x^\mu \, \CC((x)),
\end{equation}
where~$\CC((x))$ denotes the field of Laurent series (converging in
the punctured unit disk). The direct
decomposition~\eqref{eq:dir-decomp} just reflects the fact that
the~$x^\mu$ for distinct~$\mu \in M$ are linearly independent. Let us
write~$\indproj{\mu}\colon \galg \to \galg$ for the \emph{indicial
  projector} onto the component~$x^\mu \, \CC((x))$
of~\eqref{eq:dir-decomp}. In other words, $\indproj{\mu}$ extracts all
terms of the form~$x^{k+\mu} \: (k \in \ZZ)$ from a series
in~$\galg$. Note that combinations like~$\beta=\evl_\xi D^k
\indproj{\mu}$ provide linear functionals~$\beta \in \galg^*$ for
extracting derivatives of the~$\mu$-component of a given series
in~$\galg$.

For writing the projector corresponding to
Proposition~\ref{prop:char-adspc}, let us also introduce the
\emph{auxiliary operator}~$x^\mu\colon \CC((x)) \to x^\mu \CC((x)) \le
\galg$ and its inverse~$x^{-\mu}$. Then we shall see that the required
projector is essentially a ``twisted'' version of two kinds of
projector: one for splitting off the polynomial part of the occurring
series, and one for imposing the derivative terms. For convenience, we
shall use \emph{orthogonal projectors} in~$\CC((x))$, where the
underlying inner product is defined by~$\inner{x^k}{x^l} =
\delta_{kl}$ for all~$k,l \in \ZZ$. Such projectors are always
straightforward to compute (using linear algebra on complex matrices).

\begin{corollary}
  \label{cor:proj-adspc}
  Using the same notation as in Proposition~\ref{prop:char-adspc}, let
  $R_\mu, S_\mu\colon \CC(x)) \to \CC(x)$ be the orthogonal projectors
  onto~$[p_{\mu 1}(x), \dots, p_{\mu \, l_{\mu}}\!(x)]$ and
  onto~$x^{j_\mu} \, \CC((x))$, respectively. Writing~$R_\mu' := x^\mu
  R_\mu x^{-\mu} \indproj{\mu}$ and~$S_\mu' := x^\mu S_\mu x^{-\mu}
  \indproj{\mu}$ for their twisted analogs, we define the linear
  operator~$P\colon \galg \to \galg$ by
  \begin{equation}
    \label{eq:adm-proj}
    P = \sum_{\mu \in M} \Big( R_\mu' + S_\mu' + \sum_{\nu \in
      M} \sum_{\xi,j} x^\mu \, q_{\mu\nu\xi j}(x) \, \evl_\xi D^j
    x^{-j_\mu-\mu} S_\nu' \Big)
  \end{equation}
  is a projector onto~$\orth{\bspc}$.
\end{corollary}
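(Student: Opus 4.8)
The plan is to establish the two hallmarks of a projector onto a fixed subspace separately: that $\Img P \le \orth{\bspc}$ and that $P$ restricts to the identity on $\orth{\bspc}$. Once both hold, idempotency is automatic, since for any $u \in \galg$ we then have $Pu \in \orth{\bspc}$ and hence $P(Pu) = Pu$; moreover, fixing $\orth{\bspc}$ pointwise forces $\Img P = \orth{\bspc}$, so nothing beyond these two facts needs proof. Everything can be computed componentwise with respect to the decomposition~\eqref{eq:dir-decomp}: each summand of $P$ factors through a single indicial projector and outputs into one $x^\mu\,\CC((x))$, so with $g_\mu := x^{-\mu}\indproj{\mu} u$ and the tail series $b_\nu := x^{-\nu-j_\nu} S_\nu' u$ one gets
\[
  x^{-\mu}\indproj{\mu}(Pu) = R_\mu(g_\mu) + S_\mu(g_\mu) + \sum_{\nu \in M}\sum_{\xi,j} q_{\mu\nu\xi j}(x)\, b_\nu^{(j)}(\xi),
\]
where I have used that $\evl_\xi D^j$ applied to $x^{-\nu-j_\nu} S_\nu' u = b_\nu$ merely reads off $b_\nu^{(j)}(\xi)$.

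For the inclusion $\Img P \le \orth{\bspc}$ I would match this right-hand side against the description of $\aspc_\mu$ in Proposition~\ref{prop:char-adspc}. By definition of the orthogonal projector $R_\mu$ we have $R_\mu(g_\mu) \in [p_{\mu 1},\dots,p_{\mu l_\mu}]$, while $S_\mu(g_\mu) = x^{j_\mu} b_\mu$ with $b_\mu \in C^\omega(I)$; together with the derivative sum this is exactly $R_\mu(g_\mu) + P_\mu(b)$ for the tuple $b = (b_\nu)_{\nu \in M}$. Hence $x^{-\mu}\indproj{\mu}(Pu) \in [p_{\mu 1},\dots,p_{\mu l_\mu}] + P_\mu(C^\omega(I)^M) = \aspc_\mu$, and summing over $\mu$ gives $Pu \in \bigoplus_{\mu} x^\mu \aspc_\mu = \orth{\bspc}$ by~\eqref{eq:fin-repr}.

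It remains to show $Pu = u$ for $u \in \orth{\bspc}$, which carries the real content. For such $u$, Proposition~\ref{prop:char-adspc} furnishes a single global parameter vector $\hat v$ and a single global tuple $(b_\nu)_\nu$ with $g_\mu = \sum_i v_i\, p_{\mu i}(x) + x^{j_\mu} b_\mu(x) + \sum_\nu\sum_{\xi,j} q_{\mu\nu\xi j}(x)\, b_\nu^{(j)}(\xi)$. Since the $p_{\mu i}$ and the $q_{\mu\nu\xi j}$ all lie in the finite window $W_\mu := [\,x^k \mid k_\mu \le k \le s_\mu\,]$ whereas the tail lies in $x^{j_\mu}\CC((x))$ with $j_\mu = s_\mu+1$, these two subspaces are orthogonal; thus $S_\mu(g_\mu) = x^{j_\mu} b_\mu$ recovers the tail exactly, the $S_\nu'$ in the correction term recover the same tails, and the derivative sum reproduces $\sum_\nu\sum_{\xi,j} q_{\mu\nu\xi j}\, b_\nu^{(j)}(\xi)$ verbatim. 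Consequently $Pu = u$ reduces to the single identity $R_\mu(g_\mu) = \sum_i v_i\, p_{\mu i}$.

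The main obstacle is precisely this last identity. Because $R_\mu$ annihilates the tail automatically, one computes $R_\mu(g_\mu) = \sum_i v_i\, p_{\mu i} + \sum_\nu\sum_{\xi,j} R_\mu(q_{\mu\nu\xi j})\, b_\nu^{(j)}(\xi)$, so the desired equality holds \emph{if and only if} $R_\mu(q_{\mu\nu\xi j}) = 0$, i.e.\ the correction polynomials are orthogonal to $[p_{\mu 1},\dots,p_{\mu l_\mu}]$. This is not automatic from the raw output of Proposition~\ref{prop:char-adspc}, since the $p_{\mu i}$ (coefficients of the free parameters $v_i$) and the $q_{\mu\nu\xi j}$ (coefficients of the derivative data) inhabit the same window $W_\mu$ and may overlap. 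I would therefore interpose one normalization before invoking the corollary: replace each $q_{\mu\nu\xi j}$ by its orthogonal component $(1-R_\mu)\,q_{\mu\nu\xi j}$. Since the removed part $R_\mu(q_{\mu\nu\xi j})$ lies in $[p_{\mu i}]$ and the $v_i$ range freely, this leaves the set $\aspc_\mu$, and hence $\orth{\bspc}$, unchanged while rendering the correction polynomials orthogonal to $[p_{\mu i}]$. With this normalization $R_\mu$ cleanly separates the free-parameter contribution from the derivative correction, the identity $Pu = u$ follows, and the two inclusions combine to show that $P$ is indeed a projector onto $\orth{\bspc}$.
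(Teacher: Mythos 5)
Your proposal follows the same two-step skeleton as the paper's proof: first $\Img(P)\le\orth{\bspc}$, obtained by reading off the $x^\mu$-component of $Pu$ and matching it against the description of $\aspc_\mu$ in Proposition~\ref{prop:char-adspc}, and then $Pu=u$ on $\orth{\bspc}$, from which idempotency and surjectivity onto $\orth{\bspc}$ follow. The first step is essentially identical to the paper's (note that both you and the paper's own proof silently read the exponent in the triple sum of~\eqref{eq:adm-proj} as $x^{-j_\nu-\nu}$ rather than the printed $x^{-j_\mu-\mu}$, which appears to be a typo).

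Where you genuinely depart from the paper is in the fixed-point step, and there your version is the more careful one. The paper asserts that for $u\in\orth{\bspc}$ the polynomial part $\sum_k f_{\mu k}x^k$ already lies in $\Img(R_\mu)=[p_{\mu 1},\dots,p_{\mu\,l_\mu}]$ and that the coefficients $a_{\mu k}(\hat v,\hat b)$ involve no $\hat b$, so that the triple sum in~\eqref{eq:proj-comp} vanishes and $R_\mu$ acts as the identity. As you observe, neither claim is automatic: the polynomial part of a general $u\in\orth{\bspc}$ is $\sum_i v_i\,p_{\mu i}+\sum q_{\mu\nu\xi j}\,b_\nu^{(j)}(\xi)$, so $R_\mu$ reproduces the first summand but adds an extra $\sum R_\mu(q_{\mu\nu\xi j})\,b_\nu^{(j)}(\xi)$ on top of the triple sum already present in~\eqref{eq:proj-comp}; hence $Pu=u$ holds for all such $u$ precisely when $R_\mu(q_{\mu\nu\xi j})=0$. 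Nothing in the construction of Proposition~\ref{prop:char-adspc} forces the $q$'s to be orthogonal to the $p$'s (the $\hat b$-dependence sits in the pivot positions of the reduced system~\eqref{eq:bc-eqns}, where the $p$'s also have support), so the corollary needs the normalization you supply: replacing each $q_{\mu\nu\xi j}$ by $(1-R_\mu)q_{\mu\nu\xi j}$ changes $P_\mu$ only by terms in $[p_{\mu 1},\dots,p_{\mu\,l_\mu}]$ and therefore leaves $\aspc_\mu$, and with it $\orth{\bspc}$, unchanged. Your argument is correct, and it repairs a real gap in the paper's own proof rather than merely reproducing it; it would be worth recording the orthogonality condition $R_\mu(q_{\mu\nu\xi j})=0$ explicitly as a hypothesis or as a preprocessing step before the corollary.
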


\begin{proof}
  The action of~$P$ on a general series (which we may split at $s_\mu
  = j_\mu - 1$ in its~$\mu$-components)
  \begin{equation*}
    u(x) = \sum_{\mu \in M} x^\mu \, f_\mu(x) = \sum_{\mu \in M}
    \sum_{k=k_\mu}^{s_\mu} f_{\mu k} x^{k+\mu} + \sum_{\mu \in M}
    x^{j_\mu+\mu} \, b_\mu(x) \in \galg
  \end{equation*}
  is
  \begin{equation*}
    Pu(x) = \sum_{\mu \in M} x^\mu \bigg( \! R_\mu \Big( \sum_{k=k_\mu}^{s_\mu}
    f_{\mu k} x^k \Big) + x^{j_\mu} \, b_\mu(x)
    + \sum_{\nu \in M} \sum_{\xi, j} q_{\mu\nu\xi j}(x) \,
    b_\nu^{(j)}(\xi) \bigg),
  \end{equation*}
  where we have set~$b_\mu(x) := x^{-j_\mu} f_\mu(x) \in
  C^\omega(I)$. Extracting the $x^\mu$ component of~$Pu$ yields
  \begin{equation}
    \label{eq:proj-comp}
    R_\mu \Big( \sum_{k=k_\mu}^{s_\mu}
    f_{\mu k} x^k \Big) + x^{j_\mu} \, b_\mu(x)
    + \sum_{\nu \in M} \sum_{\xi, j} q_{\mu\nu\xi j}(x) \,
    b_\nu^{(j)}(\xi) \in \aspc_\mu
  \end{equation}
  as one sees by comparing with the last displayed equation in the
  proof of Proposition~\ref{prop:char-adspc}. Hence we may conclude
  that~$\Img(P) \le \orth{\bspc}$.

  It remains to prove that~$Pu = u$ for~$u \in \orth{\bspc}$ since
  this implies~$\Img(P) \ge \orth{\bspc}$ and~$P^2 = P$ so that~$P$ is
  indeed a projector onto~$\orth{\bspc}$ as claimed in the
  corollary. So assume~$u(x) \in \orth{\bspc}$ is arbitrary, and split
  it as above. The orders~$k_\mu$ of the series~$f_\mu$ are given by
  the curbing constraints. Since~$u(x)$ already satisfies all boundary
  conditions in~$\bspc_n + \bspc_+ \le \bspc$, the reduced row echelon
  form of~\eqref{eq:bc-eqns} will contain only zero rows. The original
  ansatz~\eqref{eq:ansatz} is then left intact; no expansion of the
  polynomial part is necessary and no~$\hat{b}$ are involved in its
  coefficients. Therefore the original coefficients~$a_{\mu k}$
  in~\eqref{eq:ansatz} coincide with the coeffients~$a_{\mu
    k}(\hat{v}, \hat{b}) = a_{\mu k}(\hat{v})$ in~\eqref{eq:bc-eqns},
  which constitute the polynomials of~$\mathcal{P}_\mu := [p_{\mu
    1}(x), \dots, p_{\mu \, l_{\mu}}\!(x)]$. Now let us
  consider~\eqref{eq:proj-comp}. Since the series~$\sum_k f_{\mu k}
  x^k$ is thus already in~$\mathcal{P}_\mu = \Img(R_\mu)$, we may omit
  the action of~$R_\mu$ and since~$a_{\mu k}(\hat{v}, \hat{b}) =
  a_{\mu k}(\hat{v})$ the triple sum in~\eqref{eq:proj-comp} is
  zero. But then~$Pu(x)$ becomes identical with~$u(x)$ as was claimed.
\end{proof}

For accomplishing Step~4 of our program, it only remains to determine
a projector~$Q$ onto the accessible space~$T(\orth{\bspc})$ from the
projector~$P$ onto the admissible space~$\orth{\bspc}$ provided in
Corollory~\ref{cor:proj-adspc}. This can be done easily since~$Q$ is
\emph{essentially a conjugate} of~$P$ except that we use a fundamental
right inverse~$\fri{T}$, for want of a proper inverse. (The formula
for~$\fri{T}$ in~\cite[Prop.~23]{RosenkranzRegensburger2008a}
and~\cite[Thm.~20]{RosenkranzRegensburgerTecBuchberger2012} may be
used but recall that in our case~$\cum = \cum_1^x$ so that~$\evl =
\evl_1$.)

\begin{proposition}
  \label{prop:proj-acspc}
  Using the same notation as in Proposition~\ref{cor:proj-adspc}, the
  operator~$$Q := T P \fri{T}\colon \galg \to \galg$$ is a projector
  onto the accessible space~$T(\orth{\bspc})$.
\end{proposition}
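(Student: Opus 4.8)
The plan is to treat $Q=TP\fri T$ as a \emph{conjugate} of the admissible projector $P$ by the operator $T$. If $T$ were invertible, $Q=TPT^{-1}$ would be a projector onto $T(\Img P)=T(\orth\bspc)$ by the usual conjugation identities, so the whole difficulty lies in replacing $T^{-1}$ by the right inverse $\fri T$, which satisfies $T\fri T=\id$ but not $\fri T\,T=\id$. The single fact that makes everything go through is that $\fri T\,T$ restricts to the identity on the admissible space,
\[
  \fri T\,T\,w = w \qquad (w\in\orth\bspc),
\]
equivalently $\orth\bspc\le\Img\fri T$. I would establish this from the fact that $\fri T$ is the right inverse belonging to the regular subproblem $(T,\bspc_n)$, so that its image contains $\orth{\bspc_n}\ge\orth\bspc$ (recall $\bspc_n\le\bspc$). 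Having isolated this identity, I would then harvest both required properties of $Q$ from it.

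Granting the identity, the image is immediate: since $\Img P=\orth\bspc$ we have $\Img Q\le T(\orth\bspc)$, and conversely, for any $v=Tw$ with $w\in\orth\bspc$ the identity gives $\fri T\,v=\fri T\,T\,w=w$, whence $Qv=TPw=Tw=v$ because $Pw=w$. Thus $Q$ fixes $T(\orth\bspc)$ pointwise and $\Img Q=T(\orth\bspc)$. Idempotency comes from the same identity applied after $P$: for every $u$ we have $Pu\in\orth\bspc$, so $(\fri T\,T)P=P$ and hence $P(\fri T\,T)P=P^2=P$, giving
\[
  Q^2 = TP(\fri T\,T)P\,\fri T = TP\,\fri T = Q.
\]
Together with $\Img Q=T(\orth\bspc)$ this shows $Q$ is a projector onto the accessible space, as claimed. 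Note that semi-regularity $\Ker T\cap\orth\bspc=O$ enters implicitly here, since it guarantees that $T$ is injective on $\orth\bspc=\Img P$ and hence that $T(\orth\bspc)$ is a faithful copy on which conjugation is meaningful.

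The step requiring genuine care is the key identity $\fri T\,T|_{\orth\bspc}=\id$. Observe first that $\fri T\,T-\id$ always maps into $\Ker T$, since $T(\fri T\,T-\id)=T\fri T\,T-T=0$; thus for $w\in\orth\bspc$ the discrepancy $\kappa:=\fri T\,T\,w-w$ lies in $\Ker T$, and $Qv-v=TP\kappa$ while $Q^2-Q=-TP(1-\fri T\,T)P\fri T$. Both corrections vanish precisely when $P\kappa=0$, i.e. when the $\Ker T$-part picked up by $\fri T\,T$ is annihilated by $P$; and because $P\kappa\in\orth\bspc$ with $\Ker T\cap\orth\bspc=O$, this is the same as asking $\kappa=0$. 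Consequently the bare right-inverse property $T\fri T=\id$ does \emph{not} suffice—for a fundamental system with nontrivial tails the image of a generic admissible function under $\fri T\,T$ differs from it by a genuine kernel element. The real content of the proof is therefore to verify that the particular $\fri T$ in use (initialized at the regular endpoint, matched with the coefficient-functional normalization of $\bspc_n$ from the lemma above) has $\Img\fri T\supseteq\orth{\bspc_n}\ge\orth\bspc$, so that $\kappa=0$ outright and the conjugation argument closes.
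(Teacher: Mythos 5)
Your overall strategy---treating $Q$ as a conjugate of $P$ and locating the difficulty in the failure of $\fri{T}T=1$---matches the paper's, and your formal manipulations ($\fri{T}T-1$ maps into $\Ker T$; the corrections to $Q^2-Q$ and to $Qv-v$ are governed by $P\kappa$) are the same ones the paper uses. But the key lemma you rely on is false. The fundamental right inverse $\fri{T}$ is \emph{not} the right inverse belonging to the regular subproblem $(T,\bspc_n)$: it is initialized at the regular endpoint, so $\fri{T}T=1-U$ where $U$ is the projector onto $\Ker T$ along $\orth{[\evl_1,\evl_1 D,\dots,\evl_1 D^{n-1}]}$, and hence $\Img\fri{T}$ consists of functions vanishing to order $n$ at $x=1$. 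This does \emph{not} contain $\orth{\bspc_n}$, whose defining functionals are coefficient functionals at the singularity (plus perhaps one evaluation at $1$); already in the motivating example an admissible $u$ with $u'(1)\neq0$ lies in $\orth{\bspc}$ but not in $\Img\fri{T}$. So the identity $\fri{T}Tw=w$ for $w\in\orth{\bspc}$, on which your whole argument rests, fails in general, and the inclusion $\Img\fri{T}\supseteq\orth{\bspc_n}$ that you defer to a final verification cannot be verified because it is not true.

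Your own error analysis then derails at the inference ``$P\kappa=0$ \dots is the same as asking $\kappa=0$.'' Semi-regularity gives $\Ker T\cap\orth{\bspc}=O$, which shows $TP\kappa=0$ iff $P\kappa=0$ (since $P\kappa\in\orth{\bspc}$); it says nothing about injectivity of $P$ on $\Ker T$. The truth is the opposite: the proof closes because $P$ \emph{annihilates all of} $\Ker T$, i.e.\ $\Ker T\le\Ker P=:\ospc$, so that $P\kappa=0$ automatically even though $\kappa\neq0$. This kernel inclusion is the genuinely nontrivial content of the paper's proof. It is obtained by intersecting the decomposition $\orth{\bspc}\dirs\ospc=\galg$ (from $P$ being a projector onto $\orth{\bspc}$) with $\Ker(T)\dirs\orth{\bspc_n}=\galg$ (from regularity of $(T,\bspc_n)$), which yields both $\orth{\bspc}\dirs(\ospc\cap\orth{\bspc_n})\dirs(\ospc\cap\Ker T)=\galg$ and $\orth{\bspc}\dirs(\ospc\cap\orth{\bspc_n})\dirs\Ker T=\galg$; comparing the two forces $\ospc\cap\Ker T=\Ker T$. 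Once $PU=0$ is in hand, both of your computations go through verbatim: $Q^2=TP(1-U)P\fri{T}=TP\fri{T}=Q$, and for $f=Tu$ with $u\in\orth{\bspc}$ one gets $P\fri{T}f=Pu-PUu=u$, hence $Qf=f$. The missing idea is therefore this kernel-inclusion lemma, not a property of $\fri{T}$.
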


\begin{proof}
  Let us first check that~$Q$ is a projector. Writing~$U := 1 -
  \fri{T} T$ for the projector onto~$\Ker{T}$ along~$[\evl, \evl D,
  \dots, \evl D^{n-1}]$ and using~$P^2 = P$, we will indeed get
  \begin{equation*}
    Q^2 = TP^2 \fri{T} - TPUP\fri{T} = TP\fri{T} = Q
  \end{equation*}
  provided we can ascertain that~$\Ker{T} \le \Ker{P} =: \ospc$ since
  then~$PU=0$. We know that~$\orth{B} \dirs \ospc = \galg$ since~$P$
  is a projector onto~$\orth{\bspc}$ along~$\ospc$. On the other hand,
  we have also~$\Ker(T) \dirs \orth{\bspc_n} = \galg$ since~$(T,
  \bspc_n)$ is regular. Intersecting~$\ospc$ in the former
  decomposition with the latter yields
  \begin{equation}
    \label{eq:decomp1}
    \orth{\bspc} \dirs (\ospc \cap \orth{\bspc_n}) \dirs (\ospc \cap
    \Ker{T}) = \galg.
  \end{equation}
  But~$\bspc_n \le \bspc$ implies~$\orth{\bspc} \le \orth{\bspc_n}$,
  so intersecting the decomposition~$\orth{\bspc} \dirs \ospc = \galg$
  with~$\orth{\bspc_n}$ leads to~$\orth{\bspc} \dirs (\ospc \cap
  \orth{\bspc_n}) = \orth{\bspc_n}$. Using the other
  decompostion~$\Ker(T) \dirs \orth{\bspc_n} = \galg$ one more time we
  obtain
  \begin{equation}
    \label{eq:decomp2}
    \orth{\bspc} \dirs (\ospc \cap \orth{\bspc_n}) \dirs \Ker{T} =
    \galg.
  \end{equation}
  Comparing~\eqref{eq:decomp1} and~\eqref{eq:decomp2}, we can apply
  the well-known rule~\cite[(2.6)]{Korporal2012} to obtain the
  identity~$\ospc \cap \Ker{T} = \Ker{T}$ and hence the required
  inclusion~$\Ker{T} \le \ospc$.

  It remains to prove that~$\Img(Q) = T(\orth{\bspc})$. The inclusion
  from left to right is obivous, so assume~$f=Tu$ with~$u \in
  \orth{\bspc}$. Then~$\fri{T} f = u - Uu$ and hence~$P \fri{T} f = Pu
  - PU u = u$ because~$P$ projects onto~$\orth{\bspc}$ and~$PU=0$ from
  the above. But then we have also~$Qf=TP\fri{T}f = Tu = f$ and in
  particular~$f \in \Img(Q)$.
\end{proof}

We have now sketched how to carry out the four main steps in our
program aimed at the algorithmic treatment of finding/imposing
``good'' boundary conditions on a Fuchsian differential equation with
one (mild) singularity. At the moment we do not have a full
implementation of the underlying algorithms in \tma\ (or any other
system). However, we have implemented a prototype version of some
portion of this theory. We shall demonstrate some of its features by
with example from engineering mechanics.

\section{Application to Functionally Graded Kirchhoff Plates}
\label{sec:appl}

Circular plates play an important role for many application areas in
engineering mechanics and mathematical physics. If the plates are thin
(the ratio of thickness to diameter is small enough), one may employ
the well-known Kirchhoff-Love plate theory~\cite{Reddy2007}, whose
mathematical description is essentially two-dimensional (via a linear
second-order partial differential equation in two independent
variables). We will furthermore restrict ourselves to \emph{circular
  Kirchhoff plates} so as to have a one-dimensional mathematical
model, via a linear ordinary differential equation of second order.

However, we shall not assume homogeneous plates. Indeed, the precise
manufacture of \emph{functionally graded} materials is an important
branch in engineering mechanics. In the case of Kirchhoff plates, the
functional grading is essentially the variable thickness~$t=t(r)$ or
variable bending rigidity~$D=D(r)$ of the plate along its radial
profile. (We write~$r$ for the radius variable ranging between zero at
the center and the outer radius~$r=b$.)

Let~$w=w(r)$ be the displacement of the plate as a function of its
radius. This is the quantity that we try to determine. It induces the
\emph{radial and tangential moments} given, respectively by
\begin{align*}
  M_r(r) &= -D(r) \, \big( w''(r)+ \tfrac{\nu}{r} \, w'(r) \big),\\
  M_\theta(r) &= -D(r) \, \big( \tfrac{1}{r} \, w'(r) + \nu \, w''(r) \big),
\end{align*}
where~$\nu$ is the Poisson's ratio of the plate (which we assume
constant). For typical materials, $\nu$ may be taken as~$\tfrac{1}{3},
\tfrac{1}{4}, \tfrac{1}{5}$ or even~$0$. A reasonable constitutive law
for the bending rigidity is
\begin{equation}
  \label{eq:bend-rig}
  D(r) = \tfrac{E(r) \, t(r)^3}{12(1-\nu^2)},
\end{equation}
where~$E=E(r)$ is the variable Young's modulus of the plate.

The \emph{equilibrium equation} can then be written as
\begin{equation}
  \label{eq:equi-eqns}
  \frac{d M_r}{dr} + \frac{M_r-M_\theta}{r} = Q_r,
\end{equation}
where~$Q_r = Q_r(r)$ is the cumulative load
\begin{equation*}
  Q_r(r) = -\frac{1}{2\pi r} \int_0^r q(r) \, 2\pi r \, dr =
  -\frac{1}{r} \int_0^r q(r) \, r \, dr
\end{equation*}
induced by a certain \emph{loading}~$q=q(r)$ that may be thought to
describe the weight (or other forces) acting in each ring~$[r, r+dr]$.

For the calculational treatment of~\eqref{eq:equi-eqns} it it useful
to introduce the function~$\phi := - w'(r)$, which represents the
(negative) slope of the plate profile. In terms of~$\phi$, the
equilibrium equation is given by
\begin{equation*}
  \phi''(r) + \Big( \frac{1}{r} + \frac{D'(r)}{D(r)} \Big) \, \phi'(r)
  + \Big( \nu \, \frac{D'(r)}{D(r)} - \frac{1}{r}
  \Big) \frac{\phi(r)}{r} = \frac{Q_r(r)}{D(r)}.
\end{equation*}
A typical example of a useful thickness grading is the \emph{linear
  ansatz}~$t = t_0(1-\tfrac{r}{b})$, cut off beyond some~$a<b$ close
to~$b$; this describes a radially symmetric pointed plate with
straight edges (more or less a very flat cone). Suppressing the
cut-off for the moment and changing the independent variable~$r$
to~$\rho = r/b$, we have thus thickness~$t(\rho) = t_0 \cdot (1-\rho)$
and from~\eqref{eq:bend-rig} bending rigidity~$D(\rho) = D_0 \cdot
(1-\rho)^3$ with~$D_0 := E t_0^3 / 12(1-\nu^2)$. The equilibrium
equation becomes now
\begin{equation*}
  \phi''(\rho) + \Big( \frac{1}{\rho} - \frac{3}{1-\rho} \Big)
  \phi'(\rho) - \Big( \frac{1}{\rho} + \frac{1}{1-\rho} \Big)
  \frac{\phi(\rho)}{\rho} = \frac{Q_r(\rho) \, b^2}{D_0 (1-\rho)^3},
\end{equation*}
where we have set~$\nu=\tfrac{1}{3}$ for simplicity. Note that the
right-hand side of this equation, which we shall designate
by~$f(\rho)$, is not a fixed function of~$\rho$ but depends on our
choice of the loading. Hence we consider~$f(x)$ as a \emph{forcing
function}.

For the \emph{boundary conditions} (for once we use this word in its
original literal sense!) we shall use~$w'(0) = w'(a) = 0$, which
translates to~$\phi'(0) = \phi'(\beta) = 0$ in the~$\phi=\phi(\rho)$
formulation, with the abbreviation~$\beta := a/b < 1$. Physically
speaking, this corresponds to a plate that is clamped in the center
and left free at its periphery (this comes from translating suitable
boundary conditions for the displacement~$w=w(r)$ and taking the
appropriate limits). In summary, we have the boundary problem
\begin{equation}
  \label{eq:kirchhoff-bvp}
  \bvp{  \phi''(\rho) + \Big( \frac{1}{\rho} - \frac{3}{1-\rho} \Big)
    \phi'(\rho) - \Big( \frac{1}{\rho} + \frac{1}{1-\rho} \Big)
    \frac{\phi(\rho)}{\rho} = f(\rho),\vspace*{0.5ex}}{\phi(0) = \phi(\beta) = 0,}
\end{equation}
which is indeed of the type discussed in
Section~\ref{sec:bp-one-sing}, by a simple scaling from~$I=[0,1]$
to~$[0,\beta]$. Its treatment in the \grgr\ package proceeds as
follows [we apologize for the lousy graphics rendering---we plan to
fix this as soon as possible]:

\medskip
\noindent\includegraphics[width=\textwidth,height=0.4\textwidth]{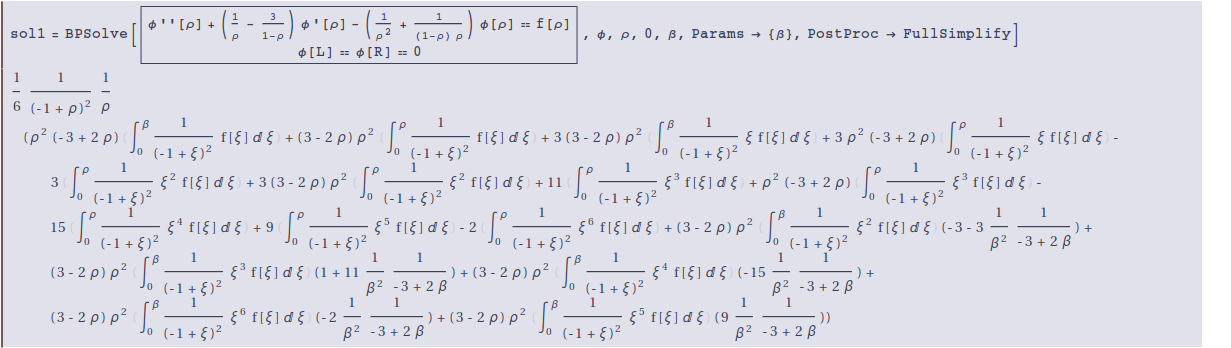}
\medskip

The output is the Green's operator~$G$ of~\eqref{eq:kirchhoff-bvp}
applied to a generic forcing function~$f(\rho)$, giving the
solution~$\phi(\rho) = Gf\,(\rho)$ as an integral
\begin{equation*}
  \int_0^1 g(\rho, \xi) \, f(\xi) \, d\xi
\end{equation*}
in terms of the Green's function~$g(\rho, \xi)$; the latter can also
be retrieved explicitly if this is desired (note that the expression
is clipped on the right-hand side so the two case conditions~$\xi \le
\rho$ and~$\rho < \xi$ labelling the two lines are not visible):

\medskip
\noindent\includegraphics[width=\textwidth,height=0.3\textwidth,%
trim = 0 0 1100 0,clip]{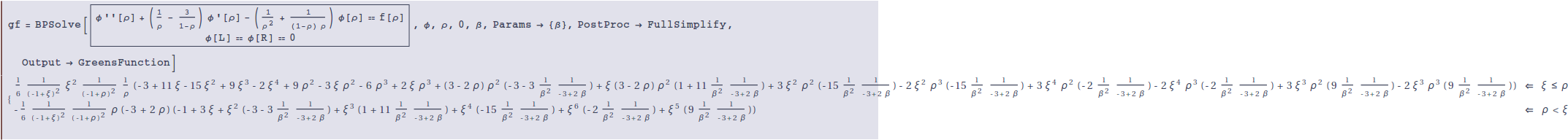}
\medskip

Let us now choose a constant loading~$q(\rho) = q_0$. Taking the
cut-off into account, this leads to the forcing function~$f(\rho) =
\tfrac{\rho +1}{\rho (\rho -1)^2}$. In this case, we can compute the
solution~$\phi(\rho)$ of~\eqref{eq:kirchhoff-bvp} explicitly. For
definiteness, let us choose a cut-off at~$\beta = 0.9$. In that case,
we obtain
\begin{align*}
  \phi(&\rho) = \Big( -2790 \rho ^3+1944 \, \rho ^3 \log
  (9/\rho-9)+2000 \, \rho ^3 \log (1-\rho )\\
  & -2000 \rho ^3 \log (10-10\rho)+4671 \rho ^2-2916 \rho ^2
  \log (9/\rho-9) -3000 \rho ^2 \log (1-\rho )\\
  & +3000 \rho ^2 \log (10-10\rho)-1944 \rho +972 \log (1-\rho )
  \Big) \Big/
  \Big( 2916 (\rho -1)^2 \rho \Big),
\end{align*}
and the corresponding displacement~$w(\rho) = -\cum_0^\beta\, \phi(\rho)
\, d\rho$ is given by
\begin{align*}
  w(&\rho) = \Big( -972 (\rho -1) \,
  \text{Li}_2\left((1-\rho)^{-1}\right)+972 (\rho -1) \,
  \text{Li}_2(1-\rho )-2790 \rho ^2\\
  & +1944 \rho ^2 \log (9) +3944 \rho ^2 \log (1-\rho )-2000 \rho ^2
  \log(-10 (\rho -1))\\
  & -1944 \rho ^2 \log (\rho )+2853 \rho +500 \rho \log ^2(10)+14 \rho
  \log ^2(1-\rho )\\
  & -500 \rho \log ^2(-10 (\rho -1))-14 \log ^2(1-\rho
  )+500 \log ^2(-10 (\rho -1))\\
  & -972 \rho \log (9)-1500 \rho \log (100)+486 \rho \log (81) \log
  (1-\rho )-7825 \rho \log (1-\rho )\\
  & +4000 \rho \log (-10 (\rho
  -1))+972 \rho \log (1-\rho ) \log (\rho )+972 \rho \log (\rho )\\
  & -1944
  \log \left(9/\rho-9\right)-486 \log (81) \log (1-\rho
  )+6825 \log (1-\rho )\\
  & -3000 \log (-10 (\rho -1))-972 \log (1-\rho )
  \log (\rho )-1944 \log (\rho )-500 \log ^2(10)\\
  & +1944 \log (9)+1500\log (100) \Big)
  \Big/ 2916 (1 - \rho)
\end{align*}

We have displayed the graphs of these solutions~$\phi(\rho)$
and~$w(\rho)$ below.

\medskip
\noindent
\includegraphics[width=0.5\textwidth,height=0.5\textwidth]{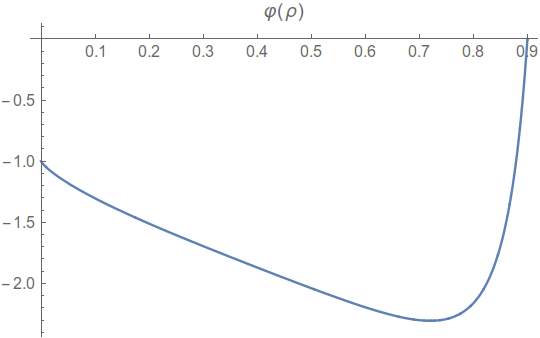}
\includegraphics[width=0.5\textwidth,height=0.5\textwidth]{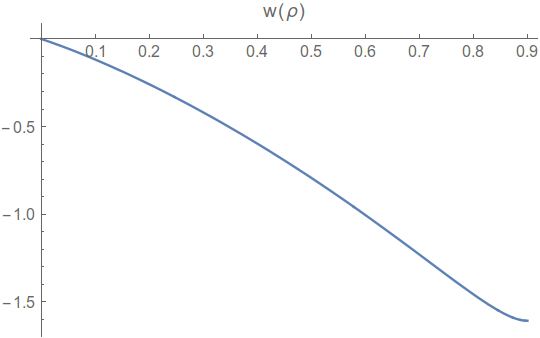}
\medskip

Of course one could use different functional gradings and/or loading
functions, and the integrals would not always come out in closed
form. In this case one can resort to numerical integration (which is
also supported by \emph{Mathematica}).

%\bibliographystyle{plain}
%\bibliography{casc15}
%\bibliography{../../../Bibliography/Master}

\begin{thebibliography}{10}

\bibitem{BuchbergerCraciunJebeleanEtAl2006}
Bruno Buchberger, Adrian Craciun, Tudor Jebelean, Laura Kovacs, Temur Kutsia,
  Koji Nakagawa, Florina Piroi, Nikolaj Popov, Judit Robu, Markus Rosenkranz,
  and Wolfgang Windsteiger.
\newblock Theorema: Towards computer-aided mathematical theory exploration.
\newblock {\em Journal of Applied Logic}, 4(4):359--652, December 2006.
\newblock ISSN 1570-8683.

\bibitem{CoddingtonLevinson1955}
Earl~A. Coddington and Norman Levinson.
\newblock {\em Theory of ordinary differential equations}.
\newblock McGraw-Hill Book Company, Inc., New York-Toronto-London, 1955.

\bibitem{Guo2012}
Li~Guo.
\newblock {\em An Introduction to Rota-Baxter Algebras}.
\newblock International Press, 2012.

\bibitem{Korporal2012}
Anja Korporal.
\newblock {\em Symbolic Methods for Generalized Green's Operators and Boundary
  Problems}.
\newblock PhD thesis, Johannes Kepler University, Linz, Austria, November 2012.
\newblock Abstracted in ACM Communications in Computer Algebra, Vol. 46, No. 4,
  Issue 182, December 2012.

\bibitem{KorporalRegensburgerRosenkranz2011}
Anja Korporal, Georg Regensburger, and Markus Rosenkranz.
\newblock Regular and singular boundary problems in {MAPLE}.
\newblock In {\em Proceedings of the 13th International Workshop on Computer
  Algebra in Scientific Computing, CASC'2011 (Kassel, Germany, September 5-9,
  2011)}, volume 6885 of {\em Lecture Notes in Computer Science}. Springer,
  2011.

\bibitem{Reddy2007}
J.N. Reddy.
\newblock {\em Theory and analysis of elastic plates and shells}.
\newblock CRC Press, Taylor and Francis, 2007.

\bibitem{RegensburgerRosenkranz2009}
Georg Regensburger and Markus Rosenkranz.
\newblock An algebraic foundation for factoring linear boundary problems.
\newblock {\em Ann. Mat. Pura Appl. (4)}, 188(1):123--151, 2009.
\newblock DOI:10.1007/s10231-008-0068-3.

\bibitem{Rosenkranz2009}
Camelia Rosenkranz.
\newblock {\em Retrieval and Structuring of Large Mathematical Knowledge Bases
  in Theorema}.
\newblock PhD thesis, Resarch Institute for Symbolic Computation, University of
  Linz, Austria, February 2009.

\bibitem{Rosenkranz2005}
Markus Rosenkranz.
\newblock A new symbolic method for solving linear two-point boundary value
  problems on the level of operators.
\newblock {\em {J}. {S}ymbolic {C}omput.}, 39(2):171--199, 2005.

\bibitem{RosenkranzBuchbergerEngl2003}
Markus Rosenkranz, Bruno Buchberger, and Heinz~W. Engl.
\newblock Solving linear boundary value problems via non-commutative
  {G}r\"obner bases.
\newblock {\em Appl. {A}nal.}, 82:655--675, 2003.

\bibitem{RosenkranzRegensburger2008a}
Markus Rosenkranz and Georg Regensburger.
\newblock Solving and factoring boundary problems for linear ordinary
  differential equations in differential algebras.
\newblock {\em Journal of Symbolic Computation}, 43(8):515--544, 2008.

\bibitem{RosenkranzRegensburgerTecBuchberger2012}
Markus Rosenkranz, Georg Regensburger, Loredana Tec, and Bruno Buchberger.
\newblock Symbolic analysis of boundary problems: {F}rom rewriting to
  parametrized {G}r{\"o}bner bases.
\newblock In Ulrich Langer and Peter Paule, editors, {\em Numerical and
  Symbolic Scientific Computing: Progress and Prospects}, pages 273--331.
  Springer, 2012.

\bibitem{RosenkranzSerwa2015}
Markus Rosenkranz and Nitin Serwa.
\newblock Green's functions for stieltes boundary problems.
\newblock In {\em ISSAC}, 2015.
\newblock to appear.

\end{thebibliography}

\end{document}